\theoremstyle{plain}
\newtheorem{theorem}{Theorem}
\newtheorem*{lemma}{Lemma}
\begin{document}

\title{Derivatives of Blaschke Products and Model Space Functions}

\author{David Protas}
\address{Department of Mathematics \\
         California State University \\
         Northridge, California 91330}
\email{david.protas@csun.edu}

\keywords{Blaschke product, model space, Bergman space, separated, uniformly discrete, uniformly separated, interpolating sequence, Stolz}
\subjclass[2010]{Primary: 30J10; Secondary: 30H20, 30H10}
\date{}

\begin{abstract}
The relationship between the distribution of zeros of an infinite Blaschke product $B$ and the inclusion in weighted Bergman spaces $A_{\alpha}^p$ of the derivative of $B$ or the derivative of functions in its model space $H^2  \ominus BH^2$ is investigated.
\end{abstract}

\maketitle

\section{Preliminaries}
If $f$ is analytic in the open unit disc $U$ and $0 < p < \infty$, then
\[M_p(r; f) = \left\{\frac{1}{2\pi} \int_0^{2\pi} |f(re^{it})|^p\,dt\right\}^{1/p}\]
is defined for each positive $r < 1$. The Hardy space $H^p$ is the set of all functions $f$, analytic in $U$, for which $\|f\|_{H^p} = \sup_{0<r<1} M_p(r; f)$ is finite. Let $dA(z)$ denote Lebesgue area measure. If $f$ is analytic in $U$, $0 < p < \infty$ and $\alpha > -1$, then $f$ is said to be in the space $A_{\alpha}^p$ if 
\[\|f\|_{A_{\alpha}^p} = \left\{\frac{1}{\pi} \iint_U |f(re^{it})|^p (1-r)^{\alpha}\,dA(z)\right\}^{1/p}\] 
is finite. Put $A^p = A_0^p$.

If $\{a_n\}$ is a sequence of complex numbers such that $0<|a_n|<1$ for all $n = 1,2, \ldots$ and $\sum_{n = 1}^{\infty} (1-|a_n|) < \infty$, the  Blaschke product
\[B(z) = \prod_{n = 1}^{\infty} \frac{\bar{a}_n}{|a_n|} \frac{a_n - z}{1 - \bar{a}_nz}\]
is an analytic function in $U$ with zeros $\{a_n\}$. A sequence $\{a_n\}$ of points in $U$ is said to be separated or uniformly discrete if there is a constant $\delta > 0$ such that $\rho(a_m,a_n) \ge \delta$ for all $m \ne n$, where $\rho$ is the pseudohyperbolic metric in $U$ and is given by
\[ \rho(z,w) = \left|\frac{z-w}{1-\bar{w}z}\right|,\quad z, w \in U. \]
The sequence $\{a_n\}$ is said to be uniformly separated if there is a constant $\delta > 0$ such that
\[ \inf_n \prod_{m \ne n} \rho(a_m,a_n) \ge \delta. \]
A Blaschke product whose zeros are uniformly separated is called an interpolating Blaschke product. It is clear that uniformly separated sequences form a proper subset of the set of all uniformly discrete sequences.

For any Blaschke product $B$, let $(BH^2)^{\perp} = H^2 \ominus BH^2$ be the orthogonal complement of the invariant subspace $BH^2$ in $H^2$. $(BH^2)^{\perp}$ is called the \textit{model space} or \textit{star-invariant subspace} for $B$ in $H^2$. Here are a few standard results about $(BH^2)^{\perp}$ (see, for example, \cite{GMR} and \cite{HNP}) that we will be using. For $z \in U$, the reproducing kernel for $(BH^2)^{\perp}$ is
\[K_z(u) = \frac{1 - \overline{B(z)}B(u)}{1 - \overline{z}u},\]
$u \in U$. That is, $\langle f, K_z \rangle = f(z)$ for all $f \in (BH^2)^{\perp}$. Next, for any Blaschke product $B$ with zeros $\{a_n\}$, let $B_1 = 1$ and $B_n$ be the subproduct of $B$ with zeros $a_1, \ldots, a_{n-1}$, $n = 2, 3, \ldots$ . If
\[g_n(z) = B_n(z)(1 - |a_n|^2)^{1/2}/(1 - a_{n}z),\]
then $\{g_n\}$ is an orthonormal basis for $(BH^2)^{\perp}$. If the sequence $\{a_n\}$ is uniformly separated, then $\{h_n\}$ is a Riesz basis for $(BH^2)^{\perp}$, where
\[h_n(z) = (1 - |a_n|)^{1/2}/(1 - a_{n}z).\]

We will be considering the relationship between the condition $\sum_n (1-|a_n|)^{\beta} < \infty$ and the inclusion of $B'$ and the derivative of $(BH^2)^{\perp}$ functions in various spaces $A_{\alpha}^p$. In \cite{G}, A. Gluchoff proved that an inner function has its first derivative in $A_{\alpha}^p$, where $\alpha > -1$ and $p \ge \alpha + 2$, if and only if it is a finite Blaschke product. So, we will restrict our attention to $\alpha > -1$, $p < \alpha + 2$ since we are interested only in infinite Blaschke products.

In any theorem concerned with a function being in $A_{\alpha}^p$ for certain points $(\alpha,p)$, we will say that the \textit{scope} of the theorem is the set of all such points. Also, we write $K_1 \lesssim K_2$ or $K_2 \gtrsim K_1$ if there exists a constant $C>0$ such that $K_1 \le CK_2$ for all values of $K_1$ and $K_2$ under consideration, and we write $K_1 \asymp K_2$ if $K_1 \lesssim K_2$ and $K_1 \gtrsim K_2$.

\section{The Derivative of a Blaschke Product}

In \cite{K}, H.~O.~Kim proved that if $\alpha > -1$, $\max((\alpha + 2)/2, \alpha + 1) < p < \alpha +2$, and  $B$ is a Blaschke product with zeros $\{a_n\}$ such that 
\begin{equation} \label{sigma}
\sum_n (1-|a_n|)^{2-p+\alpha} < \infty,
\end{equation}
then $B' \in A^p_{\alpha}$. In the other direction, Gluchoff proved in \cite[Theorem 6]{G} that if $B$ is a Blaschke product  with zeros $\{a_n\}$ that are uniformly separated and if $B' \in A_{\alpha}^p$ where $\alpha > -1$, $p \ge 1$, and $\alpha + 1 < p < \alpha + 2$, then condition~\eqref{sigma} holds. In \cite[Theorem 3]{P2}, the scope of Gluchoff's result was extended to $p > 0$. On the other hand, in \cite[Theorem~2(ii)]{AV}, A. Aleman and D. Vukoti\'{c} generalized Gluchoff's result to uniformly discrete zeros and certain normal weights, and in \cite[Theorem~1]{PRR}, F.~P\'{e}rez-Gonz\'{a}lez, A.~Reijonen, and  J.~R\"{a}tty\"{a} further generalized this to doubling weights. Both of these last two generalizations are stated for $p > \frac{1}{2}$. However, the proofs of both can be seen to show that condition~\eqref{sigma} holds even with the hypothesis $p > \frac{1}{2}$ dropped. In the following theorem, we state the result for the classical weights $(1 - r)^{\alpha}$ that holds for uniformly discrete zeros and scope extended to $p > 0$, and we supply a straightforward proof.

\begin{theorem} \label{discrete} Let $B$ be an infinite Blaschke product with  zeros $\{a_n\}$ that are uniformly discrete. If $B' \in A_{\alpha}^p$ where $\alpha > -1$ and $\alpha + 1 < p < \alpha + 2$, then condition~\eqref{sigma} holds.
\end{theorem}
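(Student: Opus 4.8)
The plan is to bound $\|B'\|_{A_\alpha^p}^p$ below by a constant multiple of $\sum_n(1-|a_n|)^{\beta}$, where I set $\beta=\alpha+2-p$; the hypotheses are exactly that $0<\beta<1$, since $p<\alpha+2$ forces $\beta>0$ and $p>\alpha+1$ forces $\beta<1$. First I would use uniform discreteness to localize: fixing $\sigma\in(0,1)$, the pseudohyperbolic discs $\Delta_n=\{z:\rho(z,a_n)<\sigma\}$ have overlap bounded by some $N=N(\delta,\sigma)$ (a packing bound), so that, with $I_n=\iint_{\Delta_n}|B'|^p(1-|z|)^\alpha\,dA$, one has $\sum_n I_n\le N\pi\,\|B'\|_{A_\alpha^p}^p<\infty$. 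The goal then becomes a good lower bound for $I_n$.

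For each $n$ I would normalize by the involutive automorphism $\varphi_n(z)=(a_n-z)/(1-\bar a_n z)$, which interchanges $0$ and $a_n$, and set $\tilde B_n=B\circ\varphi_n$, a Blaschke product with $\tilde B_n(0)=0$. The identities $1-|\varphi_n(w)|^2=(1-|w|^2)|\varphi_n'(w)|$, $B'(\varphi_n(w))=\tilde B_n'(w)/\varphi_n'(w)$ and $dA=|\varphi_n'|^2\,dA$ combine to give, after the total power $-p+\alpha+2=\beta$ of $|\varphi_n'|$ is collected,
\[ I_n=(1-|a_n|^2)^{\beta}\iint_{|w|<\sigma}|\tilde B_n'(w)|^p(1-|w|^2)^\alpha|1-\bar a_n w|^{-2\beta}\,dA(w). \]
On $|w|<\sigma$ both $1-|w|^2$ and $|1-\bar a_n w|$ lie between positive constants depending only on $\sigma$, so $I_n\asymp(1-|a_n|^2)^{\beta}\iint_{|w|<\sigma}|\tilde B_n'|^p\,dA$. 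When the zeros are uniformly \emph{separated} this closes the proof at once: $\tilde B_n$ factors as $w\,\psi_n$ with $\psi_n$ zero-free on $|w|<\delta$, $|\psi_n(0)|=\prod_{m\ne n}\rho(a_m,a_n)\ge\delta'$, and Harnack's inequality for the nonnegative harmonic function $-\log|\psi_n|$ gives $|\tilde B_n'|\asymp\prod_{m\ne n}\rho(a_m,a_n)\gtrsim1$ near $0$, whence $\iint_{|w|<\sigma}|\tilde B_n'|^p\,dA\gtrsim1$ and $\sum_n(1-|a_n|^2)^\beta\lesssim\sum_n I_n<\infty$.

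The main obstacle is that, for zeros that are only uniformly \emph{discrete}, the same computation yields merely $I_n\asymp(1-|a_n|^2)^{\beta}\big(\prod_{m\ne n}\rho(a_m,a_n)\big)^p$, and the product can tend to $0$ along tangential accumulations, so no purely local estimate can recover $(1-|a_n|^2)^\beta$. The factor must instead be charged to the part of $\|B'\|_{A_\alpha^p}^p$ carried near the neighboring zeros: when $\prod_{m\ne n}\rho(a_m,a_n)$ is small, the relation $-\log\prod_{m\ne n}\rho(a_m,a_n)\asymp(1-|a_n|^2)\sum_{m\ne n}(1-|a_m|^2)/|1-\bar a_m a_n|^2$ shows there are many zeros $a_m$ clustering pseudohyperbolically near $a_n$, and for those $1-|a_m|\asymp1-|a_n|$, so their local contributions carry comparable weight. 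Turning this into an honest lower bound—via the bounded-overlap structure and a Schur-type estimate for the resulting double sum over clusters—is where I expect the real work, and it is here that the restriction $\beta<1$ (that is, $p>\alpha+1$) should be used essentially to make the compensation summable.
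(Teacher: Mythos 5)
Your argument is complete only in the uniformly separated case, which is Gluchoff's original setting and not what the theorem asserts. For zeros that are merely uniformly discrete you correctly identify the obstruction---the local quantity $\prod_{m \ne n} \rho(a_m,a_n)$ controlling $|\tilde B_n'|$ near $0$ has no positive lower bound, so $I_n \gtrsim (1-|a_n|)^{\beta}$ fails---but the proposed remedy (charging the deficit to the mass of $\|B'\|_{A_\alpha^p}^p$ near clustering neighbors via a Schur-type estimate) is left entirely as a program. You say yourself that this is ``where I expect the real work''; that work is the whole content of the theorem beyond the known interpolating case, so as it stands the proof has a genuine gap.

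The paper closes this gap not by compensation but by changing what is integrated. By a theorem of Ahern, for $\alpha > -1$ and $p > \alpha + 1$ one has $B' \in A_\alpha^p$ if and only if $\iint_U \bigl(\frac{1-|B(z)|}{1-|z|}\bigr)^p (1-|z|)^{\alpha}\,dA(z) < \infty$, and the lower bound is then run on this integral instead of on $\iint |B'|^p (1-|z|)^{\alpha}\,dA$. On the pseudohyperbolic disk $\Delta_n$ of radius $R = \delta/2$ about $a_n$ one has $|B(z)| \le |\frac{z-a_n}{1-\bar a_n z}| < R$ using only the \emph{single} factor of $B$ vanishing at $a_n$, so $1 - |B(z)| \ge 1 - R$ with no reference to the other zeros; clustering of neighbors only makes $1-|B|$ larger, whereas it makes $|B'|$ smaller, which is exactly why your local estimate degenerates and Ahern's does not. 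Pairwise disjointness of the $\Delta_n$ (all that uniform discreteness gives) then yields $\sum_n (1-|a_n|)^{2-p+\alpha} \lesssim \iint_U \bigl(\frac{1-|B|}{1-|z|}\bigr)^p (1-|z|)^{\alpha}\,dA < \infty$. Note also that Ahern's equivalence is where the hypothesis $p > \alpha + 1$ enters; in your sketch the role of that hypothesis beyond giving $\beta < 1$ is never actually used.
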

\begin{proof}
We have $\rho(a_i,a_j) \ge \delta > 0$ for all $j \ne i$. For each $n = 1, 2, \ldots$, put 
\[\Delta_n = \{z: \rho(z, a_n) < R\},\]
where $R =  \frac{\delta}{2}$. It is known (see \cite{DS}) that $\Delta_n$ is a Euclidean disk with radius equal to $R(1 - |a_n|^2)/(1  - R^2|a_n|^2)$, and that there exists a constant $C$ (independent of $n$) such that $1 - |z| \le C(1 - |a_n|)$ for all $z \in \Delta_n$. Also note that for each $n$, $z \in \Delta_n \Rightarrow |B(z)| \le |\frac{z - a_n}{1 - {\bar{a}_n}z}| < R$.
Thus,
\begin{align*}
 \iint_{\Delta_n} &\left(\frac{1 - |B(z)|}{1 - |z|} \right)^p (1-|z|)^{\alpha}\,dA(z)   \\
&\ge (1 - R)^p  \iint_{\Delta_n} (1 - |z|)^{\alpha -p}\,dA(z)  \\
&\ge (1 - R)^p C^{\alpha - p} (1 - |a_n|)^{\alpha - p}\pi \left( \frac{R(1 - |a_n|^2)}{1 - R^2 |a_n|^2} \right)^2  \\
&\asymp (1 - |a_n|)^{2 - p + \alpha}.
\end{align*}
Then since the disks $\Delta_n$ are pairwise disjoint,
\[ \sum_n (1 - |a_n|)^{2 - p + \alpha} \lesssim \iint_U\left(\frac{1 - |B(z)|}{1 - |z|} \right)^p (1-|z|)^{\alpha}\,dA(z). \]
The result follows since, as proved by P. Ahern in \cite{A2}, $B' \in A_{\alpha}^p$ if and only if
\[\iint_U \left(\frac{1 - |B(z)|}{1 - |z|} \right)^p (1-|z|)^{\alpha}\,dA(z) < \infty\]
when $\alpha > -1$, $p > \alpha + 1$.
\end{proof}

We now investigate what can be deduced when the hypothesis that the zeros of $B$ be uniformly discrete is dropped. In \cite[Theorem 6]{AC2}, P.~Ahern and D.~Clark proved that if $B$ is any Blaschke product with zeros $\{a_n\}$ such that $B' \in A_{\alpha}^1$ for $-1 < \alpha < -\frac{1}{2}$, then $\sum_n (1 - |a_n|)^{\beta} < \infty$ for all $\beta > \frac{1+\alpha}{-\alpha}$. This was generalized in \cite{P3} to $\sum_n (1 - |a_n|)^{\beta} < \infty$ for all $\beta > \frac{2 - p + \alpha}{p - \alpha - 1}$ if $-1 < \alpha < -\frac{1}{2}$, $\frac{3}{2} + \alpha < p \le 1$, and $B' \in A_{\alpha}^p$. We now prove a further generalization that increases the scope and is valid for $\beta = \frac{2 - p + \alpha}{p - \alpha - 1}$.

\begin{theorem} \label{indiscrete} Let $B$ be an infinite Blaschke product with  zeros $\{a_n\}$. If $B' \in A_{\alpha}^p$ where $\alpha > -1$ and $\frac{3}{2} + \alpha < p < 2 + \alpha$, then 
\[ \sum_n (1-|a_n|)^{\frac{2-p+\alpha}{p - 1 - \alpha}} < \infty.\]
\end{theorem}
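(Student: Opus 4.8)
The plan is to combine P.~Ahern's integral characterization with a pointwise lower bound for $1-|B|$ that converts the multiplicative structure of the Blaschke product into an additive one. By Ahern's result quoted after Theorem~\ref{discrete} (applicable since $p>\alpha+1$), the hypothesis $B'\in A_\alpha^p$ is equivalent to
\[ I:=\iint_U (1-|B(z)|)^p(1-|z|)^{\alpha-p}\,dA(z)<\infty. \]
Writing $t_n(z)=1-\rho(z,a_n)^2=\tfrac{(1-|a_n|^2)(1-|z|^2)}{|1-\bar a_nz|^2}\in[0,1]$, one has $|B(z)|^2=\prod_n(1-t_n(z))\le\exp(-\sum_n t_n(z))$, so that, using $1-|B|\gtrsim 1-|B|^2$ and $1-e^{-x}\gtrsim\min(1,x)$,
\[ 1-|B(z)|\gtrsim\min\Big(1,\;(1-|z|^2)\,S(z)\Big),\qquad S(z):=\sum_n\frac{1-|a_n|^2}{|1-\bar a_nz|^2}. \]
Hence it suffices to bound $\iint_U\min(1,(1-|z|)S(z))^p(1-|z|)^{\alpha-p}\,dA$ from below.

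Set $a=\alpha+2-p$ and $b=p-1-\alpha$, so $a,b>0$, $a+b=1$, and the target exponent is $\tfrac{2-p+\alpha}{p-1-\alpha}=a/b$. The heart of the argument is a sharp lower estimate for the contribution of a single cluster of zeros. Concentrating on a zero $a_k$ together with the zeros pseudohyperbolically near it, I would change variables by the involution $w=\tfrac{a_k-z}{1-\bar a_kz}$; since $1-|z|^2=\tfrac{(1-|a_k|^2)(1-|w|^2)}{|1-\bar a_kw|^2}$ and $dA(z)=\tfrac{(1-|a_k|^2)^2}{|1-\bar a_kw|^4}dA(w)$, the Jacobian and the weight $(1-|z|)^{\alpha-p}$ combine into a factor $(1-|a_k|^2)^{a}\,|1-\bar a_kw|^{-2a}(1-|w|^2)^{\alpha-p}$. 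The decisive point is that $2a<1$ precisely when $p>\tfrac32+\alpha$, the lower end of the scope; this makes the angular integral $\int|1-\bar a_kw|^{-2a}\,d(\arg w)$ converge and be $\asymp1$ uniformly. Performing the remaining radial integral of $\min(1,N(1-|w|))^p(1-|w|)^{\alpha-p}$, where $N$ counts the nearby zeros, yields a contribution $\asymp(1-|a_k|)^{a}N^{b}=\big(N(1-|a_k|)^{a/b}\big)^{b}$.

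It then remains to assemble these cluster estimates. The plan is to group the zeros into clusters $C_j$ at their natural pseudohyperbolic scales by a stopping-time (Whitney-type) construction, arranged so that the regions on which the lower bound is exploited have bounded overlap; summing should give
\[ I\gtrsim\sum_j\Big(\sum_{a_n\in C_j}(1-|a_n|)^{a/b}\Big)^{b}. \]
Because $b=p-1-\alpha<1$ exactly when $p<2+\alpha$ (the upper end of the scope) and the summands of a convergent series tend to $0$, the inequality $x\le x^{b}$ for $0\le x\le1$ upgrades finiteness of $\sum_j(\cdot)^{b}$ to finiteness of $\sum_j(\cdot)$, i.e. $\sum_n(1-|a_n|)^{a/b}<\infty$, as asserted; it is this concavity step that delivers the \emph{endpoint} exponent $\tfrac{2-p+\alpha}{p-1-\alpha}$ rather than any $\beta$ strictly larger. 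The main obstacle is the middle step: a single high-multiplicity cluster forces $|B|$ to be small not on a Stolz cone but on a much wider, shallow region hugging the boundary, so a naive Whitney-box or Stolz-angle estimate undercounts the multiplicity (producing $N^{a}$ in place of the correct $N^{b}$). Capturing the full $N^{b}$ sharply while keeping the cluster regions essentially disjoint, so that the contributions may legitimately be added, is the delicate part, and it is exactly there that the change of variables and the condition $2a<1$ must be used with care.
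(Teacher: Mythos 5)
Your proposal takes a genuinely different route from the paper, and it contains a genuine gap that you yourself flag but do not close. The local analysis is sound: the lower bound $1-|B(z)|\gtrsim\min\bigl(1,(1-|z|^2)S(z)\bigr)$, the M\"obius change of variables producing the factor $(1-|a_k|^2)^{a}\,|1-\bar a_kw|^{-2a}(1-|w|^2)^{\alpha-p}$, the observation that $2a<1$ exactly on the scope $p>\tfrac32+\alpha$ so the angular integral is uniformly bounded, and the radial computation giving $N^{b}$ for an $N$-fold cluster are all correct, as is the concavity step at the end (using $b<1$, which is where $p<2+\alpha$ enters). But the entire difficulty of the theorem is concentrated in the step you defer: producing a decomposition of an \emph{arbitrary} Blaschke sequence into clusters $C_j$ such that (i) the region from which each cluster draws its $N_j^{b}$ gain --- which, by your own radial computation, is the annular zone $\rho(z,a_k)^2\approx 1-1/N_j$ of a pseudohyperbolic disk of radius nearly $1$, hence an enormous region hugging $\partial U$ --- has bounded overlap with the regions of the other clusters, and (ii) a cluster whose zeros sit at \emph{different} moduli and scales still yields the quantity $\bigl(\sum_{a_n\in C_j}(1-|a_n|)^{a/b}\bigr)^{b}$ rather than something smaller. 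Nested or multi-scale configurations (e.g.\ zeros along a radius with $1-|a_n|=2^{-n}$) are exactly where a naive Whitney-type stopping rule breaks down, and no construction is given. As written, the argument proves the theorem only for sequences that decompose into well-separated single-scale clusters; the general case is asserted, not proved. Note that what you are attempting to prove directly is, after normalization, essentially Theorem~8 of Ahern--Clark \cite{AC1}, which is itself a nontrivial result --- so the missing step is not a routine verification.

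For comparison, the paper's proof avoids all of this by a three-step reduction through known results: $B'\in A^p_{\alpha}$ implies $B'\in A^1_{\alpha-p+1}$ (Theorem~5.1 of \cite{A1}, valid since $1+\alpha<p<2+\alpha$), which implies $B'\in H^{p-1-\alpha}$ (Theorem~6.2 of \cite{A1}, valid since $\tfrac12<p-1-\alpha<1$), and then Theorem~8 of \cite{AC1} converts $B'\in H^{q}$ with $\tfrac12<q<1$ into $\sum_n(1-|a_n|)^{(1-q)/q}<\infty$, which with $q=p-1-\alpha$ is exactly the stated exponent. If you want to salvage your direct approach, the place to look is the proof of Ahern--Clark's Theorem~8, whose combinatorial core is precisely the multi-scale bookkeeping your sketch leaves open; alternatively, adopt the paper's citations and your integral estimates become unnecessary.
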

\begin{proof}
First note that $0 < \frac{2 - p + \alpha}{p - \alpha - 1} < 1$ since $\alpha > -1$ and $\frac{3}{2} + \alpha < p < 2 + \alpha$. Suppose $B' \in A^p_{\alpha}$. By Theorem~5.1 of \cite{A1}, $B' \in A^1_{\alpha - p + 1}$ since $1 + \alpha < p < 2 + \alpha$, and then by Theorem~6.2 of \cite{A1}, $B' \in H^{p - 1 - \alpha}$ since $\frac{1}{2} < p - 1 - \alpha < 1$. Theorem 8 of \cite{AC1} then says that $ \sum_n (1-|a_n|)^{\frac{2-p+\alpha}{p - 1 - \alpha}} < \infty$, again since $\frac{1}{2} < p - 1 - \alpha < 1$.
\end{proof}

\section{The Derivative of Model Space Functions}

For a given infinite Blaschke product $B$, we will be investigating conditions that imply that $f' \in A^p_{\alpha}$ for all $f \in (BH^2)^{\perp}$. (In Theorem~4 of \cite{C}, W.~Cohn proved a result of this sort, but for $f'$ being in a Hardy space.) We will start, however, with a condition on $(\alpha, p)$ that ensures that  $f' \in A^p_{\alpha}$ for all $f \in H^2$.

\begin{theorem} \label{H2} Let $f$ be any function in $H^2$. If $\alpha > 1$ and $0 < p < \frac{4}{3} + \frac{2}{3}\alpha$, then $f' \in A^p_{\alpha}$.
\end{theorem}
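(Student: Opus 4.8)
The plan is to reduce everything to integral means and two quantitative facts about $f\in H^2$. Writing $f(z)=\sum_{n\ge 0}a_nz^n$, I would first record that, by the definition of the norm, $\|f'\|_{A_\alpha^p}^p = 2\int_0^1 M_p^p(r;f')\,(1-r)^\alpha r\,dr$, so it suffices to show $\int_0^1 M_p^p(r;f')(1-r)^\alpha\,dr<\infty$. The two ingredients I would establish are: (i) a Littlewood--Paley/area estimate, namely $\int_0^1 M_2^2(r;f')(1-r)^\gamma\,dr\le \|f\|_{H^2}^2$ for every $\gamma\ge 1$; and (ii) a pointwise bound $M_\infty(r;f')\lesssim (1-r)^{-3/2}$. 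For (i) I would expand $M_2^2(r;f')=\sum_{n\ge1}n^2|a_n|^2 r^{2n-2}$, use $(1-r)^\gamma\le 1-r$ on $[0,1]$, and $\int_0^1 r^{2n-2}(1-r)\,dr\le n^{-2}$ to collapse the sum to $\sum|a_n|^2$. For (ii) I would apply Cauchy--Schwarz to $f'(z)=\sum_{n\ge1}na_nz^{n-1}$, giving $|f'(z)|\le \|f\|_{H^2}\bigl(\sum_{n\ge1}n^2|z|^{2n-2}\bigr)^{1/2}\asymp \|f\|_{H^2}(1-|z|)^{-3/2}$.

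The main case is $2\le p<\tfrac43+\tfrac23\alpha$, and here the crucial move---which is also where the sharp exponent is forced---is to split $|f'|^p=|f'|^{p-2}\,|f'|^2$ rather than bound $f'$ pointwise throughout. Since $p-2\ge 0$ I would estimate $M_p^p(r;f')\le M_\infty^{p-2}(r;f')\,M_2^2(r;f')$, and then feed in (ii) to get $M_p^p(r;f')\lesssim (1-r)^{-\frac32(p-2)}M_2^2(r;f')$. Consequently $\int_0^1 M_p^p(r;f')(1-r)^\alpha\,dr\lesssim \int_0^1 M_2^2(r;f')(1-r)^{\alpha-\frac32(p-2)}\,dr$, and the hypothesis $p<\tfrac43+\tfrac23\alpha=\tfrac23(\alpha+2)$ is exactly what makes the new exponent $\alpha-\tfrac32(p-2)=\alpha+3-\tfrac32p>1$, so (i) applies and the integral is finite. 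The hard part is really recognizing that the naive route---inserting the pointwise bound (ii) directly into $\int |f'|^p(1-r)^\alpha$---only yields $p<\tfrac23(\alpha+1)$, short of the target by $\tfrac23$; retaining the full $L^2$ mass of $f'$ in the factor $M_2^2$ and spending the pointwise bound only on the cheaper power $p-2$ is what recovers the extra $\tfrac23$.

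Finally, for $0<p<2$ I would avoid a separate computation by observing that $d\mu=\tfrac1\pi(1-|z|)^\alpha\,dA(z)$ is a finite measure on $U$ (as $\alpha>-1$), so H\"older's inequality gives the inclusion $A_\alpha^2\subseteq A_\alpha^p$ for $p\le 2$. Since $\alpha>1$ forces $2<\tfrac43+\tfrac23\alpha$, the value $p=2$ already lies in the range handled above, whence $f'\in A_\alpha^2$ and therefore $f'\in A_\alpha^p$. I would note that the assumption $\alpha>1$ enters precisely at two compatible points: it guarantees $f'\in A_\alpha^2$ (via (i) with $\gamma=\alpha\ge1$) and it makes the interval $(2,\tfrac43+\tfrac23\alpha)$ nonempty so that the splitting argument has something to prove.
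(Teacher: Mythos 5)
Your proof is correct. The paper's argument has the same overall skeleton---settle the case $p\ge 2$ by controlling the growth of the integral means of $f'$, then descend to $0<p<2$ via the inclusion $A^2_{\alpha}\subseteq A^p_{\alpha}$---but it disposes of the main case by citing two Hardy--Littlewood theorems from Duren: $M_p(r;f)\lesssim (1-r)^{\frac{1}{p}-\frac{1}{2}}$ for $f\in H^2$ and $p>2$, hence $M_p(r;f')\lesssim (1-r)^{\frac{1}{p}-\frac{3}{2}}$, so that $\|f'\|_{A^p_{\alpha}}^p\lesssim\int_0^1(1-r)^{-\frac{3p}{2}+1+\alpha}\,dr$, which converges precisely when $p<\frac{4}{3}+\frac{2}{3}\alpha$. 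You replace that citation with a self-contained computation: the splitting $M_p^p(r;f')\le M_\infty^{p-2}(r;f')\,M_2^2(r;f')$, the elementary Cauchy--Schwarz bound $M_\infty(r;f')\lesssim(1-r)^{-3/2}$, and the Littlewood--Paley-type inequality $\int_0^1 M_2^2(r;f')(1-r)^{\gamma}\,dr\le\|f\|_{H^2}^2$ for $\gamma\ge 1$, all verified directly from the Taylor coefficients. The two routes give the same scope because the Hardy--Littlewood exponent $\frac{3}{2}-\frac{1}{p}$ is exactly the interpolation of the sup-norm exponent $\frac{3}{2}$ and the $L^2$ exponent $1$; your version is in fact marginally sharper, since retaining the integrated $L^2$ mass of $f'$ (rather than the pointwise bound $M_2(r;f')\lesssim(1-r)^{-1}$) would even capture the endpoint $p=\frac{4}{3}+\frac{2}{3}\alpha$, which the paper's pointwise estimate cannot. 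Your diagnosis of why the naive pointwise route loses a margin of $\frac{2}{3}$, and of where $\alpha>1$ enters, is accurate.
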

\begin{proof}
Let $f \in H^2$. Assume for now that $p > 2$. By two theorems of Hardy and Littlewood, $M_p(r; f) \lesssim 1/(1 - r)^{\frac{1}{2} - \frac{1}{p}}$ (see \cite[Theorem~5.9]{D}) and then $M_p(r; f') \lesssim 1/(1 - r)^{\frac{3}{2} - \frac{1}{p}}$ (see \cite[Theorem~5.5]{D}). So, 
\[\|f'\|_{A_{\alpha}^p}^p \lesssim \int_0^1 (1 - r)^{-\frac{3p}{2} + 1 + \alpha}\, dr < \infty\]
for all $(\alpha, p)$ with $\alpha > 1$ and $2 < p < \frac{4}{3} + \frac{2}{3}\alpha$. Then, $\|f'\|_{A_{\alpha}^p}^p < \infty$
for all $(\alpha, p)$ with $\alpha > 1$ and $0 < p < \frac{4}{3} + \frac{2}{3}\alpha$ since the $A_{\alpha}^p$ spaces expand as $p$ decreases.
\end{proof}

We note that J.~Littlewood and R.~Paley proved the last result and its converse for $(\alpha, p) = (1, 2)$ in \cite{LP}. The next two theorems enable us to extend the region of points $(\alpha, p)$  where  $f \in (BH^2)^{\perp} \Rightarrow f' \in A^p_{\alpha}$ for every Blaschke product $B$, beyond the scope of Theorem \ref{H2}.

\begin{theorem} \label{perp1} Let $B$ be any Blaschke product. If $\alpha > -1$ and $0 < p < \frac{2}{3} + \frac{2}{3}\alpha$, then $f' \in A^p_{\alpha}$ for all $f \in (BH^2)^{\perp}$.
\end{theorem}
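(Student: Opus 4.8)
The plan is to bound $|f'(z)|$ pointwise by $\|f\|_{H^2}$ times a power of $(1-|z|)$ and then integrate; the resulting integral will converge precisely on the stated scope. Since $(BH^2)^{\perp}$ is a closed subspace of $H^2$, the argument will involve $B$ only through the inclusion $f \in H^2$, which is exactly why the conclusion is uniform over all Blaschke products.

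First I would record the growth estimate
\[|f'(z)| \lesssim \|f\|_{H^2}\,(1-|z|^2)^{-3/2}, \qquad z \in U,\]
which holds for every $f \in H^2$. In keeping with the model-space framework, the functional $f \mapsto f'(z)$ on $(BH^2)^{\perp}$ is represented by $k_z = \partial_{\bar z} K_z = \sum_n \overline{g_n'(z)}\,g_n$, so that $|f'(z)| \le \|f\|_{H^2}\,\|k_z\|_{H^2}$ by Cauchy--Schwarz. Passing to a subspace merely projects the representer and so can only decrease its norm; hence $\|k_z\|_{H^2}$ is dominated by the analogous quantity for the Szeg\H{o} kernel $1/(1-\bar z u)$, namely $\bigl((1+|z|^2)/(1-|z|^2)^3\bigr)^{1/2}$, which yields the displayed bound. (The same estimate also follows at once from Cauchy--Schwarz applied to the Taylor coefficients of $f$, using $\sum_k k^2 |z|^{2k-2} \asymp (1-|z|^2)^{-3}$.)

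It then remains to integrate. Assuming $\|f\|_{H^2} \le 1$,
\[\|f'\|_{A_{\alpha}^p}^p = \frac{1}{\pi}\iint_U |f'(z)|^p (1-|z|)^{\alpha}\,dA(z) \lesssim \iint_U (1-|z|)^{\alpha - \frac{3p}{2}}\,dA(z) \asymp \int_0^1 (1-r)^{\alpha - \frac{3p}{2}}\,dr,\]
the last comparison absorbing the bounded angular integration and the factor $r$. This final integral is finite exactly when $\alpha - \tfrac{3p}{2} > -1$, that is, when $p < \tfrac{2}{3}(\alpha+1) = \tfrac{2}{3} + \tfrac{2}{3}\alpha$, which is the hypothesis; hence $f' \in A_{\alpha}^p$.

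I do not anticipate a genuine obstacle here: the one delicate point is that the pointwise bound must be uniform over the unit ball of $(BH^2)^{\perp}$, which the reproducing-kernel (or Taylor-coefficient) argument supplies, and the whole result then hinges on the integrability threshold of $(1-|z|)^{-3/2}$ coinciding exactly with the region $p < \tfrac{2}{3} + \tfrac{2}{3}\alpha$. Enlarging this scope, as the later theorems do, must instead exploit the distribution of the zeros $\{a_n\}$ --- information that this universal estimate deliberately discards.
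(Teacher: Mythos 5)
Your proof is correct and follows essentially the same route as the paper: a universal growth estimate of order $(1-|z|)^{-3/2}$ for $f'$ on the unit ball of $H^2$, integrated against $(1-|z|)^{\alpha}$, with convergence exactly when $p<\frac{2}{3}+\frac{2}{3}\alpha$. The only cosmetic difference is that you obtain the derivative bound pointwise via Cauchy--Schwarz on the (derivative of the) reproducing kernel, whereas the paper bounds $|f(z)|\lesssim (1-|z|)^{-1/2}$ using the model-space kernel and then invokes a Hardy--Littlewood theorem to pass to $M_p(r;f')\lesssim (1-r)^{-3/2}$.
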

\begin{proof}
Let $f \in (BH^2)^{\perp}$. Then,
\[|f(z)| = |\langle f, K_z \rangle| \le \|f\|_{H^2}  \|K_z\|_{H^2} = \|f\|_{H^2} \left( \frac{1 - |B(z)|^2}{1 - |z|^2} \right)^{\frac{1}{2}}, \]
and so $|f(z)| \lesssim 1/(1 - |z|)^{\frac{1}{2}}$. Then $M_p(r; f) \lesssim 1/(1 - r)^{\frac{1}{2}}$, which implies by \cite[Theorem~5.9]{D}, that $M_p(r; f') \lesssim 1/(1 - r)^{\frac{3}{2}}$. Therefore, 
\[\|f'\|_{A_{\alpha}^p}^p \lesssim \int_0^1 (1 - r)^{-\frac{3p}{2} + \alpha}\, dr < \infty,\]
since $0 < p < \frac{2}{3} + \frac{2}{3}\alpha$.
\end{proof}

We now present a well known proposition (see \cite[Lemma~4.3]{M}, for example), which will be used a number of times in what follows.

\begin{lemma} \label{lem} Let $a \in U$, $\alpha > -1$, and $p > 0$. Then
\[ \iint_U \frac{(1 - |z|)^{\alpha}}{|1 - \bar{a}z|^{2p}}\,dA(z) \asymp
\begin{cases}
\hspace{24pt}1 & if \hspace{8pt} 0 < 2p < \alpha + 2 \\ 
\hspace{9pt}\log\frac{1}{1 - |a|} & if \hspace{8pt} 2p = \alpha + 2 \\
\frac{1}{(1 - |a|)^{2p - \alpha - 2}} & if \hspace{8pt} 2p > \alpha + 2
\end{cases}
\]
\end{lemma}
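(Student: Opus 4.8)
The plan is to reduce this two-dimensional estimate to two successive one-dimensional integrals whose asymptotics are classical. First, rotation invariance of $dA$ (replace $z$ by $e^{i\arg a}z$) shows that the integral depends on $a$ only through $|a|$, so I may assume $a = r \in [0,1)$. Since the assertion is an asymptotic comparison, it suffices to track the behaviour as $r \to 1^-$; for $r$ away from $1$ the integrand is bounded and the analysis is elementary. Passing to polar coordinates $z = \rho e^{it}$, $dA(z) = \rho\,d\rho\,dt$, the integral becomes
\[ \int_0^1 (1-\rho)^{\alpha}\,\rho \left( \int_0^{2\pi} \frac{dt}{|1 - r\rho\,e^{it}|^{2p}} \right) d\rho. \]

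Next I would dispose of the inner angular integral. With $b = r\rho \in [0,1)$ and $|1 - be^{it}|^2 = (1-b)^2 + 4b\sin^2(t/2)$, the substitution $t = (1-b)u$ near $t=0$ yields the standard circle estimate
\[ \int_0^{2\pi} \frac{dt}{|1 - be^{it}|^{2p}} \asymp \begin{cases} 1 & 2p < 1 \\ \log\frac{1}{1-b} & 2p = 1 \\ (1-b)^{1-2p} & 2p > 1. \end{cases} \]
The cases $2p \le 1$ force $2p < \alpha + 2$ (because $\alpha > -1$ gives $\alpha + 2 > 1$), and in them the outer integral $\int_0^1 (1-\rho)^{\alpha}\rho\,(\cdots)\,d\rho$ stays bounded as $r \to 1^-$, since $\int_0^1 (1-\rho)^{\alpha}\log\frac{1}{1-\rho}\,d\rho < \infty$; this reproduces the value $\asymp 1$. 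The substantive regime is $2p > 1$, where the inner integral is $\asymp (1-r\rho)^{1-2p}$ and I am left with the radial integral.

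It remains to estimate
\[ J(r) = \int_0^1 \frac{(1-\rho)^{\alpha}}{(1 - r\rho)^{2p-1}}\,\rho\,d\rho, \]
which I regard as the heart of the argument. The key is the identity $1 - r\rho = (1-\rho) + \rho(1-r)$, so that on $[\tfrac12,1]$ (where $\rho \asymp 1$ and all the mass concentrates as $r \to 1^-$) one has $1 - r\rho \asymp (1-\rho) + (1-r)$; the contribution of $[0,\tfrac12]$ is $\asymp 1$ and harmless. Writing $u = 1-\rho$ and $\epsilon = 1-r$, the task reduces to
\[ \int_0^{1/2} \frac{u^{\alpha}}{(u+\epsilon)^{2p-1}}\,du, \]
which I would evaluate by splitting at $u = \epsilon$: on $u < \epsilon$ the denominator is $\asymp \epsilon^{2p-1}$, and on $u > \epsilon$ it is $\asymp u^{2p-1}$. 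The comparison of the exponent $2p-1$ with $\alpha+1$ is exactly the comparison of $2p$ with $\alpha+2$, and it produces a convergent integral ($\asymp 1$) when $2p < \alpha+2$, a logarithm $\log\frac{1}{\epsilon}$ when $2p = \alpha+2$, and $\asymp \epsilon^{\alpha+2-2p} = (1-r)^{-(2p-\alpha-2)}$ when $2p > \alpha+2$. Combined with the angular step, these are the three cases of the lemma.

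I expect no deep obstacle here, since both one-dimensional estimates are classical; the only real care is bookkeeping. Specifically, one must confirm that the intermediate division of the angular integral (governed by $2p$ versus $1$) does not create spurious cases in the final answer, and that the factor $\rho$ together with the contribution from $\rho \in [0,\tfrac12]$ are genuinely negligible. Both points are settled by the single observation that $\alpha > -1$ forces $\alpha + 2 > 1$: the only situation in which the angular integral fails to be a negative power of $1 - r\rho$ already lies inside the regime $2p < \alpha+2$, where the final answer is $\asymp 1$ in any case.
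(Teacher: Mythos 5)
Your argument is correct, and it is the standard proof of this classical estimate: reduce by rotation to $a=r$, apply the well-known two-sided bound for the angular integral $\int_0^{2\pi}|1-be^{it}|^{-2p}\,dt$, and then estimate the resulting radial integral by splitting at $1-\rho=1-r$. The paper itself supplies no proof at all --- it simply cites Lemma~4.3 of Mashreghi's \emph{Derivatives of Inner Functions} --- and your two-step reduction is essentially the argument found there and in the other standard references. Your closing observation that $\alpha>-1$ forces $\alpha+2>1$, so the non-power cases of the angular estimate fall entirely inside the regime where the answer is $\asymp 1$, is exactly the bookkeeping point that needs to be made; the only cosmetic caveat is that in the case $2p=\alpha+2$ the comparison with $\log\frac{1}{1-|a|}$ degenerates at $a=0$, which your remark about restricting attention to $r\to 1^-$ already covers.
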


\begin{theorem} \label{perp2} Let $B$ be any Blaschke product. If $\alpha > 0$ and $0 < p < 1 + \frac{1}{2}\alpha$, then $f' \in A^p_{\alpha}$ for all $f \in (BH^2)^{\perp}$.
\end{theorem}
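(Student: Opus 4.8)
The goal is to show that for $\alpha > 0$ and $0 < p < 1 + \tfrac{1}{2}\alpha$, every $f \in (BH^2)^\perp$ has $f' \in A^p_\alpha$. Theorem~\ref{perp1} already handles $0 < p < \tfrac{2}{3} + \tfrac{2}{3}\alpha$, so the point here is to reach the larger range $p < 1 + \tfrac{1}{2}\alpha$, which is a genuine improvement once $\alpha > 0$. The plan is to work directly with the orthonormal basis $\{g_n\}$ of $(BH^2)^\perp$ rather than with the crude pointwise bound $|f(z)| \lesssim (1-|z|)^{-1/2}$ used before; that bound throws away too much and only gives the $\tfrac{2}{3}$ slope. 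Writing $f = \sum_n c_n g_n$ with $\{c_n\} \in \ell^2$, I would differentiate termwise and try to bound $\|f'\|_{A^p_\alpha}$ in terms of $\|\{c_n\}\|_{\ell^2}$.

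\medskip

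First I would record that $g_n(z) = B_n(z)(1-|a_n|^2)^{1/2}/(1 - a_n z)$, so by the product rule
\[
g_n'(z) = \frac{B_n'(z)(1-|a_n|^2)^{1/2}}{1 - a_n z} + \frac{a_n B_n(z)(1-|a_n|^2)^{1/2}}{(1 - a_n z)^2}.
\]
Since $|B_n| \le 1$, the second term is dominated by $(1-|a_n|^2)^{1/2}/|1 - a_n z|^2$, and the natural move is to control $g_n'$ (or a convenient majorant of it) by a constant times $(1-|a_n|)^{1/2}/|1 - a_n z|^2$. The first term carries $B_n'$, which is more delicate, so I expect most of the work to go into showing it does not spoil the estimate; one clean route is to avoid differentiating $B_n$ altogether by exploiting $\langle f', K_z' \rangle$-type reproducing identities or by estimating $f'(z)$ directly through $f'(z) = \langle f, \partial_{\bar z} K_z\rangle$ and bounding the resulting kernel. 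In either formulation the effective kernel behaves like $(1-|z|)^{-3/2}$ pointwise but has better integral behavior against $(1-|z|)^\alpha\,dA$, and this is where Lemma~\ref{lem} enters decisively: the integral $\iint_U (1-|z|)^\alpha |1 - a_n z|^{-2p}\,dA(z)$ is comparable to $(1-|a_n|)^{\alpha + 2 - 2p}$ precisely in the regime $2p > \alpha + 2$.

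\medskip

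The main estimate I am aiming for is of the form
\[
\|f'\|_{A^p_\alpha}^p \;\lesssim\; \Bigl(\sum_n |c_n|^2\Bigr)^{p/2}\,\sup_n \Bigl[(1-|a_n|)^{p/2}\,(1-|a_n|)^{\alpha + 2 - 2p}\Bigr],
\]
possibly after an application of Hölder's inequality (with exponents adapted to $p$ versus $2$) to pass from the sum over $n$ inside the integral to an $\ell^2$ bound on $\{c_n\}$; the condition $p < 2 < \text{(something)}$ built into the hypotheses, together with $\alpha > 0$, is what makes the exponent arithmetic close. The exponent on $1-|a_n|$ collected from the kernel integral via Lemma~\ref{lem} (case $2p > \alpha+2$) and from the $(1-|a_n|)^{1/2}$ factor in $g_n$ should combine to a nonnegative power exactly when $p < 1 + \tfrac{1}{2}\alpha$, giving a uniform-in-$n$ bound and hence summability of the $c_n$ contributions.

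\medskip

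The hard part will be handling the cross terms and the $B_n'$ contribution rigorously: the functions $g_n'$ are not orthogonal, so I cannot simply sum $\|c_n g_n'\|^p$, and for $p < 1$ the space $A^p_\alpha$ is only a quasi-Banach space, so the triangle inequality must be replaced by the $p$-triangle inequality $\|\sum u_n\|_{A^p_\alpha}^p \le \sum \|u_n\|_{A^p_\alpha}^p$ valid for $0 < p \le 1$, with a separate Minkowski-type argument for $1 \le p < 1 + \tfrac{1}{2}\alpha$. I anticipate the cleanest presentation bypasses the basis expansion entirely: estimate $f'(z)$ pointwise by Cauchy–Schwarz against $\partial K_z/\partial \bar z$ to get $|f'(z)| \lesssim \|f\|_{H^2}(1-|z|)^{-3/2}$ as in the previous proof, but then observe that the extra integrability afforded by $\alpha > 0$ lets the $A^p_\alpha$ integral converge out to the improved endpoint — in which case the proof reduces to checking $\int_0^1 (1-r)^{-3p/2 + \alpha}\,dr < \infty$, i.e. $-3p/2 + \alpha > -1$. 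That, however, only reproduces the slope $\tfrac{2}{3}$, so reaching slope $\tfrac{1}{2}$ genuinely requires the sharper kernel-integral input from Lemma~\ref{lem}, and reconciling these two observations is the crux I would focus on.
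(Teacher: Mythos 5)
Your overall plan --- expand $f=\sum_n c_n g_n$ in the orthonormal basis, differentiate termwise, and control the resulting integrals with Lemma~\ref{lem} --- is exactly the paper's route, but the way you propose to use the Lemma is inverted, and the estimate you aim for would not close. Under the hypothesis $p<1+\frac{1}{2}\alpha$ you have $2p<\alpha+2$, so the relevant case of Lemma~\ref{lem} is the \emph{first} one: $\iint_U (1-|z|)^{\alpha}|1-\bar a_n z|^{-2p}\,dA(z)\asymp 1$, uniformly in $n$. The regime $2p>\alpha+2$ that you call ``decisive'' belongs to Theorems~\ref{zeros2} and \ref{unifsep2}, not here. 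Your proposed master inequality, with a $\sup_n$ of $(1-|a_n|)^{p/2}(1-|a_n|)^{\alpha+2-2p}$, is also structurally wrong on two counts: the exponent $\frac{p}{2}+\alpha+2-2p$ is nonnegative precisely when $p\le\frac{4}{3}+\frac{2}{3}\alpha$ (the Aleman--Vukoti\'c line), not when $p<1+\frac{1}{2}\alpha$; and a uniform-in-$n$ bound cannot by itself produce convergence of the sum over $n$. The actual engine of the proof is the Blaschke condition: once both kernel integrals are $\asymp 1$, Minkowski (for $p\ge 1$) reduces everything to $\sum_n|c_n|(1-|a_n|^2)^{1/2}$, which is finite by Cauchy--Schwarz against $\sum_n(1-|a_n|)<\infty$. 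You never invoke the Blaschke condition anywhere.

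The $B_n'$ term, which you flag as the delicate point and propose to avoid via reproducing-kernel identities, is handled in the paper by the elementary bound $|B_n'(z)|\le 1/(1-|z|)$, which turns that contribution into $\iint_U(1-|z|)^{\alpha-p}|1-\bar a_n z|^{-p}\,dA(z)$; this is again $\asymp 1$ by the first case of Lemma~\ref{lem} because $\alpha-p>-1$ and $p<\alpha-p+2$ in the stated range. Finally, your closing paragraph retreats to the pointwise bound $|f'(z)|\lesssim(1-|z|)^{-3/2}$, concedes that it only reproduces the slope $\frac{2}{3}$ of Theorem~\ref{perp1}, and leaves ``reconciling these two observations'' as the crux --- so the argument is not completed. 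The reconciliation is that one must \emph{not} take absolute values inside a single pointwise kernel estimate; the gain to slope $\frac{1}{2}$ comes from integrating each basis term separately (where the $|1-\bar a_n z|^{-2p}$ singularity is integrable against $(1-|z|)^{\alpha}\,dA$ exactly when $2p<\alpha+2$) and only then summing in $n$ using $\ell^2$ coefficients and the Blaschke condition.
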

\begin{proof}
Let $f \in (BH^2)^{\perp}$. Assume for now that $p \ge 1$. $\{g_n\}$ is an orthonormal basis for $(BH^2)^{\perp}$, where $g_n(z) = B_n(z)(1 - |a_n|^2)^{1/2}/(1 - \bar{a}_{n}z)$ and $\{a_n\}$ is the sequence of zeros of $B$. So, 
\[f(z) = \sum_n c_n B_n(z) \frac{(1 - |a_n|^2)^{1/2}}{1 - \bar{a}_{n}z},\]
where $\sum_n |c_n|^2 = \|f\|_{H^2}^2 < \infty$. Then,
\begin{equation} \label{derivative}
f'(z) = \sum_n c_n B_n(z) \bar{a}_n \frac{(1 - |a_n|^2)^{1/2}}{(1 - \bar{a}_{n}z)^2} + \sum_n  c_n B'_n(z) \frac{(1 - |a_n|^2)^{1/2}}{1 - \bar{a}_{n}z}.
\end{equation}
Because we are assuming that $p \ge 1$, we can apply Minkowski's inequality to (\ref{derivative}). We get
\begin{multline} \label{ineq} \|f'\|_{A^p_{\alpha}} \lesssim \sum_n |c_n| (1 - |a_n|^2)^{1/2} \left\{\iint_U \frac{(1 - |z|)^{\alpha}}{|1 - \bar{a}_n z|^{2p}}\,dA(z) \right\}^{\frac{1}{p}} \\ + \sum_n |c_n| (1 - |a_n|^2)^{1/2} \left\{\iint_U \frac{(1 - |z|)^{\alpha - p}}{|1 - \bar{a}_n z|^p}\,dA(z) \right\}^{\frac{1}{p}}
\end{multline}
since $|B_n(z)| \le 1$ and $|B'_n(z)| \le 1/(1 - |z|)$ for all $n$. Let $I_1(a_n)$ be the first integral and $I_2(a_n)$ be the second integral in formula~(\ref{ineq}). By the Lemma, $I_1(a_n) \asymp 1$ since $\alpha >  0  > -1$ and $2p < \alpha + 2$, while $I_2(a_n) \asymp 1$ since $\alpha - p  > \frac{\alpha}{2} - p > -1$ and $p < \alpha - p + 2$. Thus,
\[\|f\|_{A^p_{\alpha}} \lesssim \sum_n |c_n| (1 - |a_n|^2)^{1/2} \le \left(\sum_n |c_n|^2 \right)^\frac{1}{2} \left(\sum_n (1 -|a_n|^2) \right)^\frac{1}{2} < \infty \]
by H\"{o}lder's inequality. This gives the result for $p \ge 1$. The result for $p > 0$ then follows immediately.
\end{proof}

Theorems \ref{H2}, \ref{perp1} and \ref{perp2} give us values of $(\alpha, p)$ at which the derivative of every model space function for every infinite Blaschke product is in $A^p_{\alpha}$. Notice that the scopes of these three theorems overlap. When $-1 < \alpha \le 0$, only Theorem~\ref{perp1} applies. When $0 < \alpha \le 1$, Theorem~\ref{perp2} and Theorem~\ref{perp1} both apply with Theorem~\ref{perp2} being the stronger of the two. When $ \alpha  > 1$ all three apply, with Theorem~\ref{H2} being the strongest.

We now look for conditions on zero sequences $\{a_n\}$ that will imply that the derivative of every model space function corresponding to certain Blaschke products, but not necessarily all Blaschke products, is in $A^p_{\alpha}$ for additional values of $(\alpha, p)$.

\begin{theorem} \label{zeros1} Let $B$ be an infinite Blaschke product with  zeros $\{a_n\}$. If $-1 < \alpha \le 0$ and $\frac{2}{3} + \frac{2}{3}\alpha \le p < 1 + \alpha$, and if
\[ \sum_n (1 - |a_n|)^{\frac{p}{2 - p}} < \infty, \]
then $f' \in A^p_{\alpha}$ for all $f \in (BH^2)^{\perp}$.
\end{theorem}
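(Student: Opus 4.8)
The plan is to mirror the proof of Theorem~\ref{perp2}, expanding $f$ in the orthonormal basis $\{g_n\}$ of $(BH^2)^{\perp}$, but to replace Minkowski's inequality---which is unavailable here because the hypotheses force $p < 1$---with the elementary subadditivity of $t \mapsto t^p$. Indeed, since $\alpha \le 0$ we have $p < 1 + \alpha \le 1$, so $p$ is strictly less than $1$ throughout.

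First I would write $f = \sum_n c_n g_n$ with $\sum_n |c_n|^2 = \|f\|_{H^2}^2 < \infty$ and differentiate term by term to obtain the two-sum expression~(\ref{derivative}). Using $\left|\sum_n x_n\right|^p \le \sum_n |x_n|^p$ (valid for $0 < p \le 1$) on each sum, together with $(|S_1| + |S_2|)^p \le |S_1|^p + |S_2|^p$, I would majorize $|f'(z)|^p$ by a single sum over $n$ of nonnegative terms. Integrating against $(1 - |z|)^{\alpha}\,dA(z)$ and inserting the bounds $|B_n(z)| \le 1$ and $|B_n'(z)| \le 1/(1 - |z|)$ leaves exactly the two integrals $I_1(a_n) = \iint_U (1-|z|)^{\alpha} |1 - \bar{a}_n z|^{-2p}\,dA(z)$ and $I_2(a_n) = \iint_U (1-|z|)^{\alpha - p} |1 - \bar{a}_n z|^{-p}\,dA(z)$ from the proof of Theorem~\ref{perp2}. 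Evaluating these by Lemma~\ref{lem}, the condition $p < 1 + \alpha$ with $\alpha \le 0$ gives $2p < 2 + 2\alpha \le 2 + \alpha$, so $I_1(a_n) \asymp 1$; the same hypothesis guarantees $\alpha - p > -1$ and, again, $p < (\alpha - p) + 2$, so $I_2(a_n) \asymp 1$ as well. This reduces the estimate to
\[ \|f'\|_{A^p_{\alpha}}^p \lesssim \sum_n |c_n|^p (1 - |a_n|^2)^{p/2}. \]

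The crux is the final step, where the hypothesis on the zeros must be brought in, and where this theorem diverges from Theorem~\ref{perp2} (there one applied the Cauchy--Schwarz inequality unconditionally, whereas here the presence of $p$-th powers demands the summability assumption). I would apply H\"older's inequality to the sum above with conjugate exponents $2/p$ and $2/(2-p)$. The first factor becomes $\left(\sum_n |c_n|^2\right)^{p/2} < \infty$, while the second becomes $\left(\sum_n (1 - |a_n|^2)^{p/(2-p)}\right)^{(2-p)/2}$, which is finite precisely by the assumed convergence of $\sum_n (1 - |a_n|)^{p/(2-p)}$, using $1 - |a_n|^2 \asymp 1 - |a_n|$. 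The one thing I expect to require care is the exponent bookkeeping in this H\"older step, namely that $(p/2)\cdot\frac{2}{2-p} = \frac{p}{2-p}$ reproduces exactly the stated summability exponent, and that all the parameter inequalities needed for Lemma~\ref{lem} follow from $-1 < \alpha \le 0$ and $p < 1 + \alpha$ alone. The lower bound $p \ge \frac{2}{3} + \frac{2}{3}\alpha$ plays no role in the argument; it serves only to demarcate the region not already covered unconditionally by Theorem~\ref{perp1}.
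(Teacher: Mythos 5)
Your proposal is correct and follows essentially the same route as the paper: expand $f$ in the orthonormal basis $\{g_n\}$, use the subadditivity of $t\mapsto t^p$ for $0<p<1$ in place of Minkowski, evaluate $I_1$ and $I_2$ via the Lemma (both $\asymp 1$ under $p<1+\alpha$, $\alpha\le 0$), and finish with H\"older with exponents $2/p$ and $2/(2-p)$. Your parameter checks and your remark that the lower bound $\frac{2}{3}+\frac{2}{3}\alpha\le p$ only delimits the region not already covered by Theorem~\ref{perp1} both match the paper exactly.
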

\begin{proof}
By Theorem~\ref{perp1} the conclusion holds for all Blaschke products $B$ when $0 < p <  \frac{2}{3} + \frac{2}{3}\alpha$, which is why we restrict our attention to $p \ge \frac{2}{3} + \frac{2}{3}\alpha$. Let $f \in (BH^2)^{\perp}$. As in the proof of Theorem~\ref{perp2}, equation~(\ref{derivative}) holds. Note that now $0 < p < 1$. Thus, $|x + y|^p \le (|x| + |y|)^p \le |x|^p + |y|^p$ for any $x$ and $y$, and so,
\[ \|f'\|_{A^p_{\alpha}}^p \lesssim \sum_n |c_n|^p (1 - |a_n|^2)^{p/2}I_1(a_n) + \sum_n |c_n|^p (1 - |a_n|^2)^{p/2} I_2(a_n), \]
where $I_1(a_n)$ and $I_2(a_n)$ are as defined in the proof of Theorem~\ref{perp2}. By the Lemma, $I_1(a_n) \asymp 1$ since $\alpha  > -1$ and $2p < \alpha + 2$, while $I_2(a_n) \asymp 1$ since $\alpha - p  > -1$ and $p < \alpha - p + 2$. Thus,
\[\|f'\|_{A^p_{\alpha}}^p \lesssim \sum_n |c_n|^p (1 - |a_n|^2)^{p/2} \le \left(\sum_n |c_n|^2 \right)^{\frac{p}{2}} \left(\sum_n (1 -|a_n|^2)^\frac{p}{2 - p} \right)^\frac{2 - p}{2} < \infty \]
by H\"{o}lder's inequality since $\left( \frac{2}{p}\right)^{-1}\!\!\!+ \left( \frac{2}{2 - p}\right)^{-1}\!= 1$.
\end{proof}

\begin{theorem} \label{zeros2} Let $B$ be an infinite Blaschke product with zeros $\{a_n\}$. If $0 < \alpha \le 1$ and $1 + \frac{1}{2}\alpha < p < 1 + \alpha$ and if
\[ \sum_n (1 - |a_n|)^{\frac{4 - 3p + 2\alpha}{p}} < \infty, \]
then $f' \in A^p_{\alpha}$ for all $f \in (BH^2)^{\perp}$.
\end{theorem}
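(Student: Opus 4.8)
The plan is to follow the template of Theorem~\ref{perp2} almost verbatim, the only genuine change being that we now land in a different case of the Lemma. Since $\alpha > 0$ forces $p > 1 + \frac{1}{2}\alpha > 1$, I would begin exactly as there: expand $f \in (BH^2)^{\perp}$ in the orthonormal basis $\{g_n\}$, so that $f(z) = \sum_n c_n B_n(z)(1 - |a_n|^2)^{1/2}/(1 - \bar{a}_n z)$ with $\sum_n |c_n|^2 = \|f\|_{H^2}^2 < \infty$, differentiate to obtain equation~(\ref{derivative}), and then apply Minkowski's inequality (legitimate because $p \ge 1$) together with the bounds $|B_n| \le 1$ and $|B'_n| \le 1/(1 - |z|)$ to arrive at inequality~(\ref{ineq}), with the same two integrals $I_1(a_n)$ and $I_2(a_n)$.

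The decisive difference appears in estimating $I_1$ and $I_2$. Here the hypothesis $p > 1 + \frac{1}{2}\alpha$ is equivalent to $2p > \alpha + 2$, so I expect both integrals to fall into the \emph{third} case of the Lemma rather than the bounded case exploited in Theorem~\ref{perp2}. For $I_1(a_n) = \iint_U (1 - |z|)^{\alpha}|1 - \bar{a}_n z|^{-2p}\,dA(z)$ the side condition $\alpha > -1$ holds and $2p > \alpha + 2$ yields $I_1(a_n) \asymp (1 - |a_n|)^{-(2p - \alpha - 2)}$. For $I_2(a_n) = \iint_U (1 - |z|)^{\alpha - p}|1 - \bar{a}_n z|^{-p}\,dA(z)$ I would read off the Lemma's parameters as exponent $\alpha - p$ on $(1 - |z|)$ and $p/2$ in the role of its $p$; the required $\alpha - p > -1$ is precisely the hypothesis $p < 1 + \alpha$, and the governing comparison $p > (\alpha - p) + 2$ is once more $2p > \alpha + 2$, so $I_2(a_n) \asymp (1 - |a_n|)^{-(2p - \alpha - 2)}$ as well. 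Thus both integrals contribute the same boundary factor.

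Substituting these estimates into~(\ref{ineq}) and collecting the powers of $(1 - |a_n|)$---one from the explicit $(1 - |a_n|^2)^{1/2}$ and one from each $(I_j(a_n))^{1/p}$---the general summand becomes $|c_n|(1 - |a_n|)^{\beta}$ with
\[ \beta = \frac{1}{2} - \frac{2p - \alpha - 2}{p} = \frac{4 - 3p + 2\alpha}{2p}. \]
A single application of H\"{o}lder's inequality (with conjugate exponents $2$ and $2$) then splits $\sum_n |c_n|(1 - |a_n|)^{\beta}$ as the product of $(\sum_n |c_n|^2)^{1/2} = \|f\|_{H^2}$ and $(\sum_n (1 - |a_n|)^{2\beta})^{1/2}$; since $2\beta = \frac{4 - 3p + 2\alpha}{p}$, the second factor is finite exactly by the summability hypothesis, which gives $f' \in A^p_{\alpha}$.

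I do not anticipate a serious obstacle. The one point demanding care is the bookkeeping in the Lemma for $I_2$---correctly identifying its parameters and checking that $\alpha - p > -1$---and verifying that the exponent arithmetic reproduces exactly the exponent $\frac{4 - 3p + 2\alpha}{p}$ appearing in the hypothesis. It is also worth noting that this exponent is positive throughout the stated scope, since $p < 1 + \alpha \le \frac{4}{3} + \frac{2}{3}\alpha$ when $\alpha \le 1$; this confirms that the hypothesis is a genuine constraint forcing the zeros to approach the boundary sufficiently quickly.
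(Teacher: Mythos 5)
Your proposal is correct and follows essentially the same route as the paper's own proof: expand in the orthonormal basis $\{g_n\}$, apply Minkowski's inequality as in Theorem~\ref{perp2}, use the third case of the Lemma for both $I_1(a_n)$ and $I_2(a_n)$ (checking $\alpha - p > -1$ via $p < 1 + \alpha$), and finish with H\"{o}lder's inequality with exponents $2$ and $2$. The exponent bookkeeping $\frac{1}{2} - \frac{2p - \alpha - 2}{p} = \frac{4 - 3p + 2\alpha}{2p}$ matches the paper exactly.
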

\begin{proof}
By Theorem~\ref{perp2} the conclusion holds for all Blaschke products $B$ when $\alpha > 0$ and $0 < p < 1 + \frac{1}{2}\alpha$, which is why we restrict our attention to $p \ge 1 + \frac{1}{2}\alpha$.       Let $f \in (BH^2)^{\perp}$. Since $p > 1$, we can proceed as we did in the proof of Theorem~\ref{perp2}. We get 
\[ \|f'\|_{A^p_{\alpha}} \lesssim \sum_n |c_n| (1 - |a_n|^2)^{1/2}[I_1(a_n)]^{1/p} + \sum_n |c_n| (1 - |a_n|^2)^{1/2} [I_2(a_n)]^{1/p}. \]
But $2p > \alpha + 2$, $\alpha > -1$, and $\alpha > -1 + p$. So the Lemma says $I_1(a_n) \asymp \frac{1}{(1 - |a|)^{2p - \alpha - 2}}$ and $I_2(a_n) \asymp \frac{1}{(1 - |a|)^{2p - \alpha - 2}}$. Thus,
\[\|f'\|_{A^p_{\alpha}} \lesssim \sum_n |c_n| (1 - |a_n|^2)^{\frac{4-3p + 2\alpha}{2p}} \le \left(\sum_n |c_n|^2 \right)^\frac{1}{2} \left(\sum_n (1 -|a_n|^2)^{\frac{4-3p + 2\alpha}{p}} \right)^\frac{1}{2} < \infty \]
by H\"{o}lder's inequality.
\end{proof}

Our next step is to see what additional information can be deduced when the sequence of zeros $\{a_n\}$ is assumed to be uniformly separated (or the union of finitely many uniformly separated sequences). In Theorem~4 of \cite{AV}, A.~ Aleman and D.~Vukoti\'{c} proved a result for normal weights which in the case of standard weights says in part that if $\alpha > -1$, $p > 1$,  and $1 + \alpha < p < \frac{4}{3} + \frac{2}{3}\alpha$, then 
\[ \sum_n (1 - |a_n|)^{\frac{4 - 3p + 2\alpha}{p}} < \infty \]
if and only if $f' \in A^p_{\alpha}$ for all $f \in (BH^2)^{\perp}$. The next theorem shows that the scope of Theorem~\ref{zeros1} (with Theorem~\ref{perp1}) can be increased to $-1 < \alpha \le 0$, $0 < p < 1 + \frac{1}{2}\alpha$ when the zeros of $B$ are assumed to be uniformly separated.

\begin{theorem} \label{unifsep1} Let $B$ be an infinite Blaschke product with uniformly separated zeros $\{a_n\}$. If $-1 < \alpha \le 0$ and $1 + \alpha \le  p < 1 + \frac{1}{2}\alpha$ and if
\[ \sum_n (1 - |a_n|)^{\frac{p}{2 - p}} < \infty, \]
then $f' \in A^p_{\alpha}$ for all $f \in (BH^2)^{\perp}$.
\end{theorem}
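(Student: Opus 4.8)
The plan is to run the argument of Theorem~\ref{zeros1} almost verbatim, but to replace the orthonormal basis $\{g_n\}$ by the Riesz basis $\{h_n\}$ that is available precisely because the zeros are uniformly separated. First I would record that, since $\alpha \le 0$, the hypothesis $p < 1 + \frac{1}{2}\alpha$ forces $0 < p < 1$; this lets me use the subadditivity inequality $|\sum_k x_k|^p \le \sum_k |x_k|^p$ in place of Minkowski's inequality. Writing $f = \sum_n c_n h_n$ with $h_n(z) = (1-|a_n|)^{1/2}/(1 - \bar a_n z)$, the Riesz basis property gives $\sum_n |c_n|^2 \asymp \|f\|_{H^2}^2 < \infty$.

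The crucial structural gain is that differentiating this expansion produces a single sum,
\[ f'(z) = \sum_n c_n (1-|a_n|)^{1/2}\,\bar a_n\,(1 - \bar a_n z)^{-2}, \]
rather than the two sums of equation~(\ref{derivative}), because $h_n$ carries no partial Blaschke factor $B_n$. Applying $|\cdot|^p$-subadditivity, integrating term by term, and using $|a_n| < 1$, I obtain
\[ \|f'\|_{A^p_\alpha}^p \lesssim \sum_n |c_n|^p (1-|a_n|)^{p/2}\, I_1(a_n), \]
where $I_1(a_n)$ is the same integral as in the proof of Theorem~\ref{perp2}. Since $\alpha > -1$ and $2p < \alpha + 2$, the Lemma gives $I_1(a_n) \asymp 1$, uniformly in $n$. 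A final application of H\"older's inequality with conjugate exponents $\frac{2}{p}$ and $\frac{2}{2-p}$ then yields
\[ \|f'\|_{A^p_\alpha}^p \lesssim \Bigl(\sum_n |c_n|^2\Bigr)^{p/2}\Bigl(\sum_n (1-|a_n|)^{\frac{p}{2-p}}\Bigr)^{\frac{2-p}{2}} < \infty, \]
the two factors being finite by the Riesz basis bound and by hypothesis, respectively.

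I do not expect a serious obstacle; the whole point is that the only integral appearing is $I_1$. In Theorem~\ref{zeros1} the troublesome factor was $I_2(a_n) = \iint_U (1-|z|)^{\alpha-p}|1-\bar a_n z|^{-p}\,dA(z)$, whose boundedness via the Lemma requires $\alpha - p > -1$, i.e.\ $p < 1 + \alpha$; that is exactly what caps the range there. Because $h_n$ has no $B_n$ factor, no $B_n'$ term and hence no $I_2$ appears, so the only surviving constraint is $2p < \alpha + 2$, which is precisely the extended upper bound $p < 1 + \frac{1}{2}\alpha$. The one point to verify with care is that the Lemma's constant in the case $2p < \alpha + 2$ is independent of $a_n$, so that $I_1(a_n) \asymp 1$ holds uniformly; the lower restriction $1 + \alpha \le p$ then plays no role in the estimate beyond ensuring we are genuinely outside the scope already covered by Theorem~\ref{zeros1}.
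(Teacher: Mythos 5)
Your proposal is correct and follows essentially the same route as the paper: both exploit the Riesz basis $\{h_n\}$ for uniformly separated zeros to write $f'$ as a single sum with no $B_n'$ term, then use $p<1$ subadditivity, the Lemma in the case $2p<\alpha+2$ to get $I_1(a_n)\asymp 1$, and H\"older with exponents $\frac{2}{p}$ and $\frac{2}{2-p}$. Your explanatory remark about why the disappearance of $I_2$ is exactly what lifts the cap from $p<1+\alpha$ to $p<1+\frac{1}{2}\alpha$ is a correct reading of the mechanism, matching the paper's intent.
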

\begin{proof}
Let $f \in (BH^2)^{\perp}$. Since $\{a_n\}$ is uniformly separated, $f$ can be expressed as $f(z) = \sum_n c_n (1 - |a_n|)^{\frac{1}{2}}/(1 - \bar{a}_n z)$ where $\|f\|_{H^2}^2 \asymp \sum_n |c_n|^2$. So,
 \[ |f'(z)| = \left|\sum_n c_n\frac{\bar{a}_n(1 - |a_n|)^{\frac{1}{2}}}{(1 - \bar{a}_n z)^2}\right| \le \sum_n |c_n|\frac{(1 - |a_n|)^{\frac{1}{2}}}{|1 - \bar{a}_n z|^2}. \]
Now as in the proof of Theorem~6, we use $p$ being less than 1, the Lemma with $2p < \alpha + 2$, and H\"{o}lder's inequality with $\left( \frac{2}{p}\right)^{-1}\!\!\!+ \left( \frac{2}{2 - p}\right)^{-1}\!= 1$. We get
\begin{align*} 
\|f'\|^p_{A^p_{\alpha}} & \le \iint_U \left( \sum_n |c_n| \frac{(1 - |a_n|)^{\frac{1}{2}}}{|1 - \bar{a}_n z|^2} \right)^p (1 - |z|)^{\alpha}\,dA(z)\\ & \le  \sum_n  |c_n|^p (1 - |a_n|)^{\frac{p}{2}}\iint_U  \frac{(1 - |z|)^{\alpha}}{|1 - \bar{a}_n z|^{2p}} \,dA(z)  \\
& \asymp  \sum_n  |c_n|^p (1 - |a_n|)^{\frac{p}{2}}\\
& \le \left(\sum_n |c_n|^2 \right)^{\frac{p}{2}} \left(\sum_n (1 -|a_n|)^\frac{p}{2 - p} \right)^\frac{2 - p}{2} < \infty.
\end{align*}
\end{proof}

\begin{theorem} \label{unifsep2} Let $B$ be an infinite Blaschke product with uniformly separated zeros $\{a_n\}$. If $-1 < \alpha \le 0$ and $1 + \frac{1}{2}\alpha < p < \min\{1, \frac{4}{3} + \frac{2}{3}\alpha\}$, and if
\[ \sum_n (1 - |a_n|)^{\frac{4 - 3p + 2\alpha}{2 - p}} < \infty, \]
then $f' \in A^p_{\alpha}$ for all $f \in (BH^2)^{\perp}$.
\end{theorem}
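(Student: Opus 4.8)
The plan is to follow the same strategy as the proof of Theorem~\ref{unifsep1}, the only essential change being that the present hypothesis $p > 1 + \frac{1}{2}\alpha$ forces $2p > \alpha + 2$, so the Lemma will now be applied in its third case rather than its first. Since $\{a_n\}$ is uniformly separated, I would begin by writing $f(z) = \sum_n c_n (1 - |a_n|)^{1/2}/(1 - \bar{a}_n z)$ with $\sum_n |c_n|^2 \asymp \|f\|_{H^2}^2 < \infty$, and differentiate term by term to obtain the pointwise bound
\[ |f'(z)| \le \sum_n |c_n| \frac{(1 - |a_n|)^{1/2}}{|1 - \bar{a}_n z|^2}. \]

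Next, because $p < 1$, I would raise both sides to the $p$-th power and use the subadditivity inequality $(\sum_n t_n)^p \le \sum_n t_n^p$ to pass the exponent inside the sum, then integrate term by term (the integrand being nonnegative) to get
\[ \|f'\|^p_{A^p_\alpha} \lesssim \sum_n |c_n|^p (1 - |a_n|)^{p/2} I_1(a_n), \]
where $I_1(a_n) = \iint_U (1 - |z|)^\alpha / |1 - \bar{a}_n z|^{2p}\,dA(z)$ is exactly the integral from the proof of Theorem~\ref{perp2}. Here the scope enters: $\alpha > -1$ and $2p > \alpha + 2$, so the Lemma gives $I_1(a_n) \asymp (1 - |a_n|)^{-(2p - \alpha - 2)}$. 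Substituting and combining exponents, $\frac{p}{2} - (2p - \alpha - 2) = \frac{4 - 3p + 2\alpha}{2}$, which yields
\[ \|f'\|^p_{A^p_\alpha} \lesssim \sum_n |c_n|^p (1 - |a_n|)^{\frac{4 - 3p + 2\alpha}{2}}. \]

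Finally I would apply H\"{o}lder's inequality with conjugate exponents $\frac{2}{p}$ and $\frac{2}{2 - p}$, splitting off $\sum_n |c_n|^2 < \infty$ and leaving $\sum_n (1 - |a_n|)^{\frac{4 - 3p + 2\alpha}{2} \cdot \frac{2}{2 - p}} = \sum_n (1 - |a_n|)^{\frac{4 - 3p + 2\alpha}{2 - p}}$, which is finite precisely by hypothesis. I do not expect any serious obstacle; the argument is a direct transcription of Theorem~\ref{unifsep1} into the large-exponent regime of the Lemma. The one point that deserves a remark is the role of the two upper bounds defining $p < \min\{1, \frac{4}{3} + \frac{2}{3}\alpha\}$: the bound $p < 1$ is what makes the subadditivity step and the H\"{o}lder pairing legitimate, while $p < \frac{4}{3} + \frac{2}{3}\alpha$ is exactly what keeps the convergence exponent $\frac{4 - 3p + 2\alpha}{2 - p}$ positive, so that the hypothesis is not vacuous for an infinite Blaschke product.
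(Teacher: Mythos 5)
Your proposal is correct and follows essentially the same route as the paper's own proof: the Riesz basis expansion for uniformly separated zeros, termwise differentiation, the subadditivity of $t \mapsto t^p$ for $p<1$, the third case of the Lemma since $2p > \alpha+2$, and H\"{o}lder with exponents $\frac{2}{p}$ and $\frac{2}{2-p}$. Your closing remark on why $p < \frac{4}{3} + \frac{2}{3}\alpha$ keeps the summability hypothesis non-vacuous is a correct observation not spelled out in the paper.
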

\begin{proof}
Let $f \in (BH^2)^{\perp}$. Since $\{a_n\}$ is uniformly separated and $p < 1$ as in  Theorem~\ref{unifsep1},
\[ |f'(z)|  \le \sum_n |c_n|\frac{(1 - |a_n|)^{\frac{1}{2}}}{|1 - \bar{a}_n z|^2} \]
and then
\begin{align*} 
\|f'\|^p_{A^p_{\alpha}} & \le  \iint_U \left( \sum_n |c_n| \frac{(1 - |a_n|)^{\frac{1}{2}}}{|1 - \bar{a}_n z|^2} \right)^p (1 - |z|)^{\alpha}\,dA(z) \\
& \le  \sum_n  |c_n|^p (1 - |a_n|)^{\frac{p}{2}}\iint_U  \frac{(1 - |z|)^{\alpha}}{|1 - \bar{a}_n z|^{2p}} \,dA(z).
\end{align*}
But now since $2p > \alpha +2$,
\[ \iint_U \frac{(1 - |z|)^{\alpha}}{|1 - \bar{a}_n z|^{2p}}\,dA(z) \asymp \frac{1}{(1 - |a_n|)^{2p - \alpha - 2}}\,.\]
 Thus,
\begin{align*}
\| f'\|^p_{A^p_{\alpha}} &\lesssim \sum_n |c_n|^p (1 - |a_n|)^{\frac{4-3p + 2\alpha}{2}} \\
&\le \left(\sum_n |c_n|^2 \right)^\frac{p}{2} \left(\sum_n (1 -|a_n|)^{\frac{4-3p + 2\alpha}{2 - p}} \right)^\frac{2 - p}{2} < \infty.
\end{align*}
by H\"{o}lder's inequality since $\left( \frac{2}{p}\right)^{-1}\!\!\!+ \left( \frac{2}{2 - p}\right)^{-1}\!= 1$.
\end{proof}

Figure~\ref{summary} summarizes what the results in this section say about at which points $(\alpha, p)$, $f' \in A^p_{\alpha}$ for all $f \in (BH^2)^{\perp}$. Region~A, being the union of the scopes of Theorems 3, 4, and 5, gives us the points $(\alpha, p)$ at which we have shown that the desired conclusion holds for every infinite Blaschke product $B$. Regions B and C, corresponding to the scopes of Theorems 6 and 7, involve points at which the result holds for certain Blaschke products. Regions D, E and F, corresponding to the scopes of the theorem of Aleman and Vukoti\'{c}, Theorem 8 and Theorem 9, are restricted to interpolating Blaschke products. What happens in the region between $p = \frac{4}{3} + \frac{2}{3}\alpha$ and $p = 2 + \alpha$ is an open question.

\begin{figure}[ht]
\includegraphics{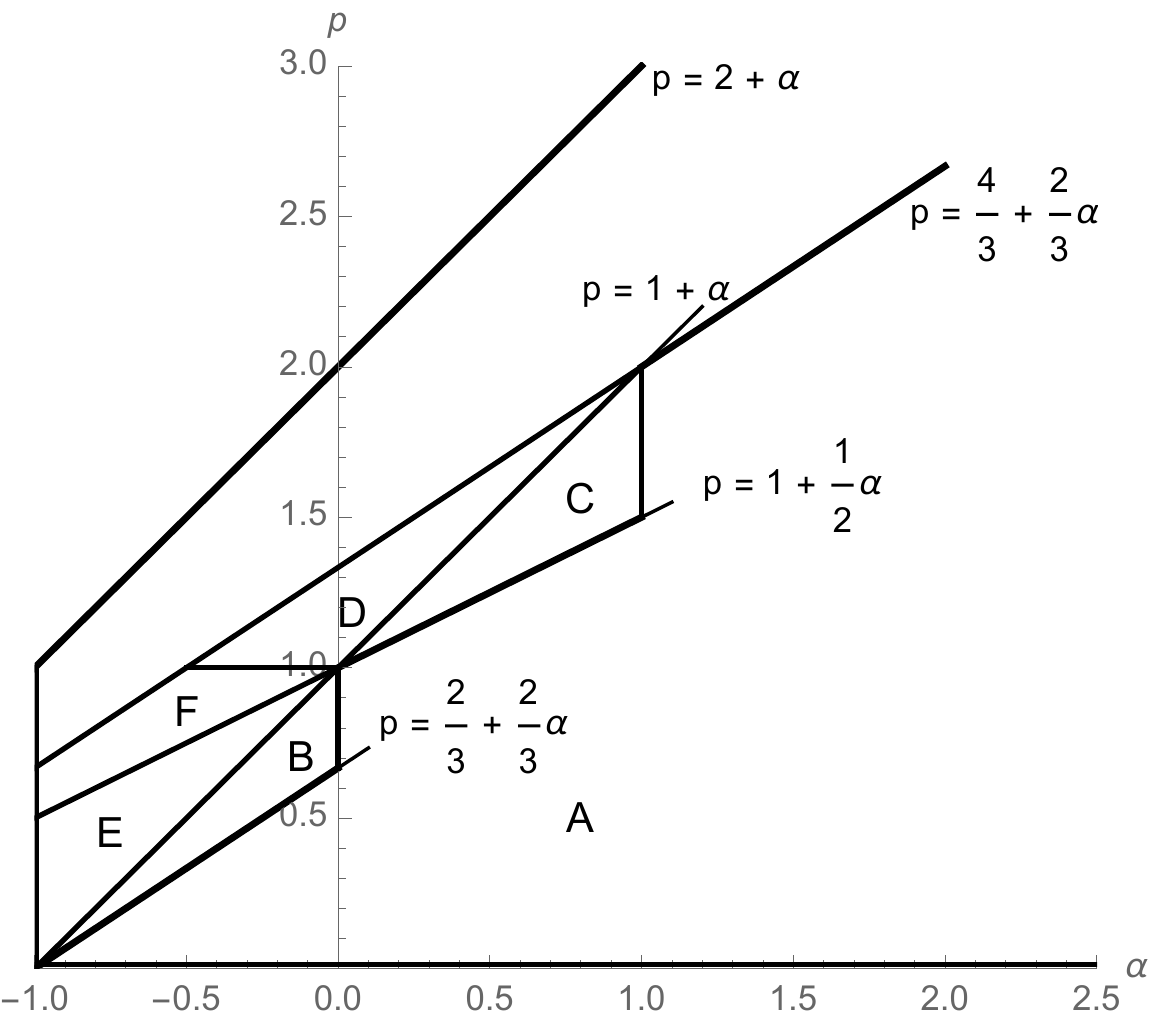}
\caption{Where $f' \in A^p_{\alpha}$ for all $f \in (BH^2)^{\perp}$}
\label{summary}
\end{figure}

We finish with one more way of restricting the zeros $\{a_n\}$ of a Blaschke product. For any $\xi \in \partial U$ and any $\eta > 1$, $\Omega_{\eta}(\xi) = \{z \in U \colon |1 - \bar{\xi} z| \le \eta (1 - |z|) \}$ is a \emph{Stolz domain} or \emph{Stolz angle}. In Theorem 1(c) of \cite{R1} (when applied to the weights $(1 - r)^{\alpha}$), A.~Reijonen, proved that if $B$ is a Blaschke product with zeros $\{a_n\}$ in a Stolz domain, and if $\alpha > -1$ and $\frac{1}{2} < p < \frac{3}{2} + \alpha$, then $B' \in  A^p_{\alpha}$. We prove a similar result for the derivative of model space functions in $A^p_{\alpha}$.

\begin{theorem} \label{stolz} Let $B$ be a Blaschke product with  zeros $\{a_n\}$ in a Stolz domain. If $\alpha > -1$ and $0 < p < 1 + \frac{2}{3}\alpha$, then $f' \in A^p_{\alpha}$ for all $f \in (BH^2)^{\perp}$.
\end{theorem}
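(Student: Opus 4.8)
The plan is to combine a sharp pointwise bound for the derivative of a model space function with Reijonen's Stolz estimate for $B'$ quoted just above. The new ingredient, special to $(BH^2)^{\perp}$, is the pointwise estimate
\[ |f'(z)| \lesssim \|f\|_{H^2}\,\frac{(1-|B(z)|^2)^{1/2}}{(1-|z|)^{3/2}}, \qquad z \in U. \]
To obtain it, I would differentiate the reproducing identity $f(z)=\langle f,K_z\rangle$ in the variable $z$: the resulting element $\kappa_z\in(BH^2)^{\perp}$ reproduces $f'(z)=\langle f,\kappa_z\rangle$, so $|f'(z)|\le\|f\|_{H^2}\|\kappa_z\|$. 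A direct computation of $\|\kappa_z\|^2$ (essentially the diagonal value of $\partial_u\partial_{\bar z}K_z(u)$) produces three terms involving $|B'(z)|^2$, $|B(z)||B'(z)|$, and $1-|B(z)|^2$; using the Schwarz--Pick inequality $|B'(z)|(1-|z|^2)\le 1-|B(z)|^2$ each of these is seen to be $\lesssim (1-|B(z)|^2)/(1-|z|)^3$, which yields the displayed bound.

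Granting the pointwise estimate, raising it to the $p$th power and integrating gives
\[ \|f'\|_{A^p_\alpha}^p \lesssim \|f\|_{H^2}^p \iint_U (1-|B(z)|^2)^{p/2}(1-|z|)^{\alpha-\frac{3p}{2}}\,dA(z), \]
so the theorem reduces to showing this integral is finite whenever $\alpha>-1$ and $p<1+\frac23\alpha$. Since $1-|B(z)|^2\le 2(1-|B(z)|)$, the integral is dominated by $\iint_U\left(\frac{1-|B(z)|}{1-|z|}\right)^{p/2}(1-|z|)^{\alpha-p}\,dA(z)$, which is exactly the quantity in Ahern's characterization used in the proof of Theorem~\ref{discrete}, now with exponent $p/2$ and weight $\alpha-p$. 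Thus, in the range where that characterization applies, finiteness of the integral is equivalent to $B'\in A^{p/2}_{\alpha-p}$. I would then invoke Reijonen's Stolz result: for zeros in a Stolz domain one has $B'\in A^{P}_{A}$ whenever $\frac12<P<\frac32+A$, and with $P=p/2$, $A=\alpha-p$ the upper constraint $P<\frac32+A$ is precisely $p<1+\frac23\alpha$.

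This settles the theorem on the top part of the scope (where $p>1$, so $P>\frac12$). The lower part is handled without the Stolz hypothesis by Theorem~\ref{perp1} for $p<\frac23+\frac23\alpha$, and the intermediate values are reached by descending in $p$: because $(1-|z|)^{\alpha}\,dA$ is a finite measure on $U$ when $\alpha>-1$, the spaces $A^p_\alpha$ expand as $p$ decreases (the device already used in Theorems~\ref{H2} and~\ref{perp2}), so the conclusion for a single $p_0$ close to $1+\frac23\alpha$ forces it for all smaller $p$.

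The main obstacle is the integral estimate itself. The difficulty is that the naive bounds---estimating $1-|B|^2$ termwise by $\sum_n(1-|a_n|^2)/|1-\bar a_n z|^2$ and using subadditivity, or summing the contributions of disjoint boxes about the zeros---fail once the zeros cluster (a clustering Stolz sequence need not satisfy $\sum_n(1-|a_n|)^{\beta}<\infty$ for $\beta<1$), so one genuinely needs the global Stolz estimate encoded in Reijonen's theorem rather than a crude pointwise or summation argument. A secondary obstacle is the corner $-1<\alpha\le 0$, where the target forces $p<1$, hence $P=p/2\le\frac12$ and $A=\alpha-p$ possibly $\le-1$; there both Reijonen's stated hypothesis $P>\frac12$ and Ahern's weight restriction must be relaxed. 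I expect this to be legitimate for the same reason noted earlier in the paper for the results of Aleman--Vukoti\'c and P\'erez-Gonz\'alez--Reijonen--R\"atty\"a---that the proofs remain valid below the stated $p$-threshold---so that the Stolz estimate of $\iint(1-|B|)^{p/2}(1-|z|)^{\alpha-3p/2}\,dA$ persists for $0<p<1$ as well.
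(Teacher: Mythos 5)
Your route is genuinely different from the paper's. The paper bounds $|f(z)|$ by $\|f\|_{H^2}\|K_z\|_{H^2}$, passes to integral means, invokes Reijonen's estimate $\int_0^{2\pi}\bigl((1-|B(re^{i\theta})|)/(1-r)\bigr)^{p/2}\,d\theta\lesssim(1-r)^{(1-p)/2}$ from \cite{R2}, and then uses the Hardy--Littlewood theorem to pass from $M_p(r;f)$ to $M_p(r;f')$. You instead bound $|f'(z)|$ pointwise by the norm of the derivative-evaluation kernel and recognize the resulting area integral as Ahern's criterion for $B'\in A^{p/2}_{\alpha-p}$, which you feed with Reijonen's Bergman-space theorem from \cite{R1}. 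Your pointwise bound $|f'(z)|\lesssim\|f\|_{H^2}(1-|B(z)|^2)^{1/2}(1-|z|)^{-3/2}$ is correct (the kernel computation plus Schwarz--Pick works as you describe), and for $\alpha>0$ your argument closes: the interval $\bigl(\max\{1,\tfrac23(1+\alpha)\},\,1+\tfrac23\alpha\bigr)$ is nonempty, on it all of Reijonen's and Ahern's hypotheses hold (note $1+\tfrac23\alpha<1+\alpha$ there, so $\alpha-p>-1$), and descending in $p$ finishes. On that part of the scope this is a legitimate alternative proof.

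The gap is the case $-1<\alpha<0$, where the theorem still asserts something beyond Theorem~\ref{perp1}, namely the range $\tfrac23+\tfrac23\alpha\le p<1+\tfrac23\alpha$. For $\tfrac23(1+\alpha)<p<1+\alpha$ your argument can be patched (Reijonen's conclusion for $P=p/2\le\tfrac12$ follows from the $P>\tfrac12$ case because $A^P_A$ expands as $P$ decreases, and Ahern's equivalence applies since $\alpha-p>-1$ and $p/2>\alpha-p+1$). But for $1+\alpha\le p<1+\tfrac23\alpha$ the weight exponent $\alpha-p$ is $\le-1$, and then neither Ahern's characterization nor Reijonen's theorem is even formulated; the space $A^{p/2}_{\alpha-p}$ is degenerate and the equivalence with the area integral breaks down. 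This is not the benign ``drop the hypothesis $p>\tfrac12$'' situation you invoke from the discussion of \cite{AV} and \cite{PRR} -- that remark concerns the necessity direction of a different theorem -- but a structural failure of your reduction. The paper's proof avoids it precisely by never forming the Bergman space $A^{p/2}_{\alpha-p}$: it uses the circle-by-circle estimate from \cite{R2} and only afterwards integrates radially, so no restriction of the form $\alpha-p>-1$ enters. To repair your argument you would need to estimate $\iint_U\bigl((1-|B(z)|)/(1-|z|)\bigr)^{p/2}(1-|z|)^{\alpha-p}\,dA(z)$ directly from that integral-means inequality rather than through membership of $B'$ in $A^{p/2}_{\alpha-p}$.
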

\begin{proof}
Let $f \in (BH^2)^{\perp}$. As in the proof of Theorem~\ref{perp1}, $|f(z)| \lesssim  \left( \frac{1 - |B(z)|^2}{1 - |z|^2} \right)^{\frac{1}{2}}$ for all $|z| < 1$, and so
\[ M_p(r; f) \lesssim \left\{\int_0^{2\pi} \left( \frac{1 - |B(re^{i\theta})|^2}{1 - r^2} \right)^{\frac{p}{2}}\,d\theta\right\}^{\frac{1}{p}}. \]
But, Reijonen proved in Proposition~3.2(ii) of \cite{R2} that $\int_0^{2\pi} \left( \frac{1 - |B(re^{i\theta})|}{1 - r} \right)^{\frac{p}{2}}\,d\theta \lesssim (1 - r)^{\frac{1}{2} - \frac{p}{2}}$ for $\{a_n\}$ being contained in a Stolz domain and $p > 1$. Then $M_p(r; f) \lesssim (1 - r)^{\frac{1}{2p} -\frac{1}{2}}$, and so $M_p(r; f') \lesssim (1 - r)^{\frac{1}{2p} -\frac{3}{2}}$ by \cite[Theorem 5.5]{D}. Therefore, $\|f'\|^p_{A^p_{\alpha}} \lesssim \int_0^1 (1 - r)^{\frac{1}{2} -\frac{3}{2}p + \alpha}\,dr < \infty$ since $p < 1 + \frac{2}{3}\alpha$.
\end{proof}

\end{document}